%
%
%
%
\documentclass{amsart}

\newtheorem{theorem}{Theorem}[section]
\newtheorem{lemma}[theorem]{Lemma}

\theoremstyle{definition}
\newtheorem{definition}[theorem]{Definition}

\theoremstyle{remark}
\newtheorem{remark}[theorem]{Remark}

\numberwithin{equation}{section}



\begin{document}

\title{Nash's Existence Theorem for Non-compact Strategy Sets}

\author{Xinyu Zhang}
\address{Beijing Normal University-Hong Kong Baptist University United International College, Zhuhai 519087, P.R. China}
\email{zhangxinyu@uic.edu.cn}

\author{Cheng Chen}
\address{School of Mathematics, Sichuan University, Chengdu 610065, P.R. China}
\email{chenchengscu@outlook.com}
\thanks{This work was supported partially by NSF of China, No. 12071316. We would also like to thank Professor Zhang Shiqing from Sichuan University for his effective discussion.}

\author{Chunyan Yang*}
\address{School of Mathematics, Sichuan University, Chengdu 610065, P.R. China}
\email{yangchunyan0125@163.com}

\subjclass[2020]{Primary 54C40, 14E20; Secondary 46E25, 20C20}

\date{\today}


\keywords{Game theory, Nash equilibrium, Ky Fan inequality, two-player zero-sum game}

\begin{abstract}
This paper generalizes the Fan-Knaster-Kuratowski-Mazurkiewicz (FKKM) lemma to the case of weak topology, and obtains the Ky Fan minimax inequality defined on non-empty non-compact convex subsets in reflexive Banach spaces, then we apply it to game theory and obtain Nash's existence theorem for non-compact strategy sets, together with John von Neumann's existence theorem in two-player zero-sum games.
\end{abstract}

\maketitle



\section{Introduction}
In a non-cooperative game, consider $ n $ players named $ P_1, P_2, \cdots, P_n $, and each $ P_i $ has a set of strategies $ K_i $, here $ i = 1,2,\cdots,n $ and $ K_i $ satisfies the following condition:
\begin{enumerate}
	\item[(O)] Each $ K_i $ is a nonempty convex compact subset of a topological vector space $ E_i $.
\end{enumerate}

If each player $ P_i $ has chosen a strategy from $ K_i $, let
\[
f_i \colon X \to \mathbb{R}
\]
be the loss of player $ P_i $. Equivalently, $ -f_i $ gives $ P_i $'s payoff.

John Forbes Nash Jr., an American mathematician, introduced the following concept and proved its existence with a 28-page Ph.D. dissertation\cite{NashPhD} in 1950.
\begin{definition}[Nash]
	The equilibrium is a point
	\[
	\tilde{q} := (\tilde{q}_1,\tilde{q}_2,\cdots,\tilde{q}_n) \in X
	\]
	satisfying that
	\[
	f_i(\tilde{q}) = \min_{p_i \in K_i} f_i(\tilde{q}_1,\cdots,\tilde{q}_{i-1},p_i,\tilde{q}_{i+1}\cdots,\tilde{q}_n).
	\]
\end{definition}

In other words, the Nash equilibrium is a solution of a non-cooperative game that in it, no one can increase one's own expected payoff by changing one's strategy while the other players keep theirs unchanged.

John Nash's work on game theory shocked the economics community and earned him the John von Neumann Theory Prize in 1978 and the Nobel Memorial Prize in Economic Sciences (along with John Harsanyi and Reinhard Selten) in 1994.

In 1961, the Chinese-born American mathematician Ky Fan extended the classical Knaster-Kuratowski-Mazurkiewicz (KKM) lemma to an infinite-dimensional result \cite{FKKM}. Later in 1972, Ky Fan applied the FKKM lemma and obtained the Ky Fan minimax inequality \cite{FanKy} (which is equivalent to Brouwer's fixed-point theorem). The Ky Fan minimax inequality is a powerful tool and has many applications, especially in mathematical economics and game theory (see Chapter 9 in \cite{Aubin}). One of its applications is to show Nash's existence theorem in a very concise way (see Theorem 10.2.3 in \cite{ZhangShiqing}).

\section{Main Results and Proofs}
In this paper, we replace the condition (O) with the following one:
\begin{enumerate}
	\item[(H)] Each $ K_i $ is a nonempty convex subset of a reflexive Banach space $ E_i $.
\end{enumerate}
And we maintain the original definition of Nash equilibrium. Then, we denote the set of strategy profiles
\[
X := K_1 \times K_2 \times \cdots \times K_n \subseteq E_1 \times E_2 \times \cdots \times E_n =: E.
\]
For a vector $ p := (p_1,p_2,\cdots,p_n) \in E $, define its norm
\[
\lVert p \rVert_E := \sqrt{\sum_{i=1}^n \lVert p_i \rVert_{E_i}^2}.
\]
Since each $ E_i $ is a reflexive Banach space, it is not difficult to verify that $ E $ (equipped with its norm topology) is a reflexive Banach space either.

At first, we generalizes the FKKM lemma to the following one.
\begin{lemma}\label{newFKKM}
	Let $ X $ be a nonempty subset of a Banach space $ E $ and let the set-valued mapping
\[
T \colon X \to 2^E
\]
satisfy the following conditions:
\begin{enumerate}
	\item[(1)] For any fixed $ x \in X $, $ T(x) $ is a nonempty and weakly closed subset of $ E $.
	\item[(2)] There exists a $ x_0 \in X $ such that $ T(x_0) $ is weakly compact in $ E $.
	\item[(3)] For any finite set $ \{x_1,x_2,\cdots,x_n\} $, there holds
	\[
	\mathrm{co}\{x_1,x_2,\cdots,x_n\} \subseteq \bigcup_{i=1}^n T(x_i).
	\]
\end{enumerate}
Then
\[
\bigcap_{x \in X} \bigl(T(x)\cap T(x_0)\bigr) \neq \emptyset,
\]
and especially,
\[
\bigcap_{x \in X} T(x) \neq \emptyset.
\]
\end{lemma}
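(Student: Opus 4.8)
The plan is to combine the finite intersection property for weakly compact sets with the classical finite-dimensional KKM lemma, exploiting the fact that on a finite-dimensional simplex the weak and norm topologies coincide.

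First I would observe that the family $\{T(x) \cap T(x_0) : x \in X\}$ consists of weakly closed subsets of $T(x_0)$ by condition (1), and since $T(x_0)$ is weakly compact by condition (2), each such set is weakly compact. A family of closed subsets of a compact space has nonempty intersection whenever it enjoys the finite intersection property; hence to prove $\bigcap_{x \in X}(T(x) \cap T(x_0)) \neq \emptyset$ it suffices to show that for every finite collection $x_1, \dots, x_n \in X$ one has $T(x_0) \cap \bigcap_{i=1}^n T(x_i) \neq \emptyset$. Adjoining $x_0$ to the list, this reduces the whole lemma to the following finite statement: for any finite set $\{y_0, y_1, \dots, y_n\} \subseteq X$, there holds $\bigcap_{i=0}^n T(y_i) \neq \emptyset$.

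Next I would pass to the finite-dimensional simplex $\Delta := \mathrm{co}\{y_0, y_1, \dots, y_n\}$. Since $\Delta$ is finite-dimensional, the weak topology restricted to $\Delta$ agrees with the norm topology, so each $F_i := T(y_i) \cap \Delta$ is (norm-)closed in $\Delta$. Moreover, condition (3) applied to arbitrary subsets $I \subseteq \{0, 1, \dots, n\}$ yields the KKM covering condition $\mathrm{co}\{y_i : i \in I\} \subseteq \bigcup_{i \in I} F_i$. The classical finite-dimensional KKM lemma — itself a consequence of Sperner's lemma, equivalently of Brouwer's fixed-point theorem — then guarantees $\bigcap_{i=0}^n F_i \neq \emptyset$, and a fortiori $\bigcap_{i=0}^n T(y_i) \neq \emptyset$. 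This establishes the finite intersection property, hence the first conclusion; the second assertion $\bigcap_{x \in X} T(x) \neq \emptyset$ is immediate, since $\bigcap_{x \in X} T(x) \supseteq \bigcap_{x \in X}(T(x) \cap T(x_0))$.

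The main obstacle is conceptual rather than computational: one must verify that the weak-topology hypotheses are precisely what is needed to run the classical argument. The decisive observations are that weak compactness of the single set $T(x_0)$ already suffices to invoke the finite intersection property, and that descending to a finite-dimensional simplex collapses the weak topology to the norm topology, so the classical KKM lemma applies verbatim with no ``weak'' modification required. The only point demanding care is checking that $F_i$ is closed in $\Delta$; this is exactly where the weak closedness of $T(y_i)$ is used, together with the coincidence of the two topologies in finite dimensions.
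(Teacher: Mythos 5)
Your proposal is correct and follows essentially the same route as the paper: the paper's ``Case 2'' contradiction argument (complements of $T(x)\cap T(x_0)$ form a weakly open cover of the weakly compact $T(x_0)$, extract a finite subcover) is precisely the proof of the finite intersection property you invoke, and the paper's ``Case 1'' is your finite-dimensional reduction, where weak and norm topologies coincide on the span so the classical KKM lemma applies. Your write-up is somewhat more explicit on the finite case (intersecting with the polytope $\Delta$ and verifying the KKM covering condition for all subfaces), which the paper only sketches, but the mathematical content is identical.
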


\begin{proof}
	Case 1. $ X $ is a set with finite points in it. Because of the equivalence of the norm topology and the weak topology on $ \mathrm{span} X $ which is a finite dimensional Banach space, in this case, the lemma is equivalent to the classic FKKM Lemma.
	
	Case 2. $ X $ is an infinite set. From hypotheses (1) and (2) we have that
	\[
	\tilde{T}(x) := T(x) \cap T(x_0)
	\]
	is weakly compact for any $ x \in X $.
	
	Next we will prove by contradiction. If the conclusion in this case is not true, we will show that there must exists such a finite set $ \{x_1,\cdots,x_m\} \subset X $ that
	\[
	\bigcap_{i = 1}^m \tilde{T}(x_i) = \emptyset,
	\]
	which is contradictory to Case 1.
	
	Assume that
	\[
	\bigcap_{x \in X} \tilde{T}(x) = \emptyset,
	\]
	we have
	\[
	\bigcup_{x \in X} \tilde{T}^c(x) = X
	\]
	by taking the complement of both sides. For an arbitrary point $ x_1 \in X $, the weakly compact set
	\[
	\tilde{T}(x_1) = X \setminus \tilde{T}^c(x_1) \subset \bigcup_{x \neq x_1} \tilde{T}^c(x).
	\]
	Notice that its right-hand side is a weakly open covering of $ \tilde{T}(x_1) $, So we can pick only a finite number of weakly open sets to cover $ \tilde{T}(x_1) $, i.e. there exists a finite set $ \{x_2,x_3,\cdots,x_m\} $ such that
	\[
	\bigcup_{i = 2}^m \tilde{T}^c(x_m) \supset \tilde{T}(x_1).
	\]
	So their complements satisfy
	\[
	\bigcap_{i = 2}^m \tilde{T}(x) \subset \tilde{T}^c(x_1).
	\]
	Hence we obtain that
	\[
	\bigcap_{i = 1}^m \tilde{T}(x) \subset \tilde{T}^c(x_1) \cap \tilde{T}(x_1) = \emptyset,
	\]
	which contradicts Case 1.
\end{proof}

Then we have the following generalized Ky Fan minimax inequality.
\begin{theorem}\label{newFanKy}
	Let $ X $ be a nonempty and convex subset of a reflexive Banach space $ E $ and let the functional
\[
f \colon X \times X \to \mathbb{R}
\]
satisfy the following conditions:
\begin{enumerate}
	\item[(i)] For any fixed $ y \in X $, the functional $ x \mapsto f(x,y) $ is quasi-concave on $ X $, i.e., for any $ l \in \mathbb{R} $, the set
	\[
	\{x \in X \vert f(x,y) \geq l\}
	\]
	is convex.
	\item[(ii)] For any fixed $ x \in X $, $ y \mapsto f(x,y) $ is weakly lower semicontinuous on $ X $, i.e., for any $ l \in \mathbb{R} $, the set
	\[
	\{(y,l) \in X \times \mathbb{R} \vert f(x,y) \leq l\}
	\]
	is weakly closed.
	\item[(iii)] $ m := \sup_{x \in X} f(x,x) < +\infty $.
	\item[(iv)] There exists a $ x_0 \in X $ such that the set
	\[
	T(x_0) := \{y \in X \vert f(x_0,y) \leq m\}
	\]
	is bounded in $ X $.
\end{enumerate}
Then
\[
\min_{y \in X} \sup_{x \in X} f(x,y) \leq \sup_{x \in X} f(x,x).
\]
\end{theorem}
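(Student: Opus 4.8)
The plan is to feed the set-valued map
\[
T(x) := \{ y \in X \mid f(x,y) \le m \}, \qquad m := \sup_{x \in X} f(x,x),
\]
(the same $T$ that already appears in hypothesis (iv)) into the generalized FKKM lemma, Lemma \ref{newFKKM}, whose three hypotheses correspond almost exactly to conditions (i)--(iv). First I would check that each $T(x)$ is nonempty and weakly closed: nonemptiness is immediate, since $f(x,x) \le m$ forces $x \in T(x)$, and weak closedness is precisely the level-set formulation of the weak lower semicontinuity in (ii) with $l = m$. Next, for the distinguished point $x_0$ of (iv), the set $T(x_0)$ is both weakly closed and bounded, hence --- and this is where reflexivity of $E$ enters --- weakly compact, because in a reflexive Banach space a bounded weakly closed set is weakly compact.

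The combinatorial hypothesis (3) of Lemma \ref{newFKKM} is where the quasi-concavity (i) does the work. I would argue by contradiction: if some $z = \sum_{i} \lambda_i x_i \in \mathrm{co}\{x_1,\dots,x_n\}$ lay in no $T(x_i)$, then $f(x_i, z) > m$ for every $i$. Since the strict superlevel set $\{x \in X \mid f(x,z) > m\}$ is the increasing union of the convex sets $\{x : f(x,z) \ge m + \varepsilon\}$ furnished by (i), it is itself convex; it contains every $x_i$, hence it contains $z$, giving $f(z,z) > m$. As $z \in X$ by convexity of $X$, this contradicts the definition of $m$. Thus $\mathrm{co}\{x_1,\dots,x_n\} \subseteq \bigcup_i T(x_i)$.

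With all hypotheses verified, Lemma \ref{newFKKM} produces a point $\bar y \in \bigcap_{x\in X}\bigl(T(x) \cap T(x_0)\bigr)$, so that $f(x,\bar y) \le m$ for all $x \in X$, i.e.\ $\sup_{x\in X} f(x,\bar y) \le m$. To promote this bound from an infimum to an attained minimum, I would set $g(y) := \sup_{x\in X} f(x,y)$, note that $g$ is weakly lower semicontinuous as a supremum of the weakly l.s.c.\ maps in (ii), and observe that for every $y \notin T(x_0)$ one has $g(y) \ge f(x_0,y) > m \ge g(\bar y)$. Hence a minimizer of $g$ over the weakly compact set $T(x_0)$ --- which exists by the Weierstrass argument for weakly l.s.c.\ functions on weakly compact sets, and which contains $\bar y$ --- is in fact a global minimizer on $X$, with value at most $m$, yielding the claimed inequality.

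The main obstacle I anticipate is twofold. The first is ensuring the passage from ``bounded'' in (iv) to ``weakly compact'' is legitimate; this rests squarely on reflexivity and is exactly the feature that lets us dispense with compactness of the strategy sets. The second is the final upgrade from $\inf$ to an attained $\min$, which is not merely formal: it genuinely requires the weak lower semicontinuity of (ii) together with the weak compactness of $T(x_0)$.
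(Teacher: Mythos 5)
Your proposal is correct and follows essentially the same route as the paper's own proof: the same map $T(x)=\{y\in X\mid f(x,y)\le m\}$ is fed into Lemma \ref{newFKKM}, with reflexivity converting boundedness of $T(x_0)$ into weak compactness, quasi-concavity yielding the FKKM covering property by the same contradiction argument, and the final $\inf$-to-$\min$ upgrade obtained by minimizing the weakly lower semicontinuous functional $y\mapsto\sup_{x}f(x,y)$ over a weakly compact set. The only cosmetic differences (your increasing-union-of-convex-sets argument in place of the paper's single $\varepsilon$, and minimizing over $T(x_0)$ rather than over $\bigcap_{x}T(x)$) do not change the substance.
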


\begin{remark}\label{coercive}
	Especially, the condition (iv) in Theorem \ref{newFanKy} is satisfied while the functional
	\[
	y \mapsto f(x_0,y)
	\]
	is coercive on $ X $, i.e. $ \lVert y_n \rVert_E \to +\infty $ implies $ f(x_0, y_n) \to +\infty $. If not, there must exist a sequence $ \{y_n\} \subset T(x_0) $ satisfying $ \lVert y_n \rVert_E \to +\infty $ so that $ f(x_0, y_n) \to +\infty $ which contradicts the definition of $ T(x_0) $. 
\end{remark}

\begin{proof}
	Set
\[
T(x) = \{y \in X \vert f(x,y) \leq m\}.
\]

(1) From hypotheses (ii) and (iii) we have that for any $ x \in X $, $ T(x) $ is nonempty (since $ x $ must belong to $ T(x) $) and weakly closed (by the definition of weak lower semicontinuity).

(2) By hypothesis (iv) we get the boundedness of $ T(x_0) $. Using the Eberlein-\v{S}mulian theorem (see Page 144 in \cite{Yosida}), $ T(x_0) $ becomes weakly relatively compact. Then, combined with conclusion (1) above, $ T(x_0) $ is weakly compact.

(3) There must holds
\[
\mathrm{co}\{x_1,x_2,\cdots,x_n\} \subseteq \bigcup_{i=1}^n T(x_i)
\]
for any finite set $ \{x_1,x_2,\cdots,x_n\} $. Otherwise, there has to be a $ \bar{y} \in \mathrm{co}\{x_1,x_2,\cdots,x_n\} $ but $ \bar{y} \notin \bigcup_{i=1}^n T(x_i) $. Then
\[
f(x_i,\bar{y}) > m, \: \forall i = 1,2,\cdots,n.
\]
That is,
\[
\exists \varepsilon > 0 \text{ such that } f(x_i,\bar{y}) \geq m + \varepsilon.
\]
From hypothesis (i) we have that the set $ \{x \in X \vert f(x,\bar{y}) \geq m + \varepsilon\} $ is convex. Since each $ x_i $ belongs to it, $ \bar{y} $ is also in it. Then
\[
f(\bar{y},\bar{y}) \geq m + \varepsilon > m,
\]
which is contradictory to the definition of $ m $.

Using Lemma \ref{newFKKM}, we have that
\[
\bigcap_{x \in X} T(x) \neq \emptyset,
\]
i.e., there exists a $ \tilde{y} \in T(x) $, that is,
\[
f(x,\tilde{y}) \leq m, \: \forall x \in X.
\]
Hence
\[
\inf_{y \in X} \sup_{x \in X} f(x,y) \leq \sup_{x \in X} f(x,\tilde{y}) \leq \sup_{x \in X} f(x,x).
\]
Next we will show that the notation ``$ \inf $'' above can be replaced by ``$ \min $''. Notice the hypothesis (ii), for any real number $ l $, the set $ \{(y,l) \in X \times \mathbb{R} \vert f(x,y) \leq l\} $ is weakly closed in $ E $. Then the set
\[
\bigcap_{x \in X} \{(y,l) \in X \times \mathbb{R} \vert f(x,y) \leq l\}
\]
is also weakly closed, which implies that the functional
\[
y \mapsto \sup_{x \in X} f(x,y)
\]
is weakly lower semicontinuous on $ X $. Moreover, the set
\[
\{y \in X \vert \sup_{x \in X} f(x,y) \leq m\} = \bigcap_{x \in X} T(x)
\]
is weakly closed in $ E $. Since
\[
M := \bigcap_{x \in X} T(x) \subseteq T(x_0) = \{y \in X \vert f(x_0,y) \leq m\},
\]
which implies that the set $ M $ is a weakly compact subset of $ T(x_0) $. Since weakly lower semicontinuous functionals always have the minimum on weakly compact sets, the functional
\[
y \mapsto \sup_{x \in X} f(x,y)
\]
is able to reach its infimum in $ M $. Then clearly we have that
\[
\inf_{y \in X} \sup_{x \in X} f(x,y) = \min_{y \in M} \sup_{x \in X} f(x,y) = \min_{y \in X} \sup_{x \in X} f(x,y).
\]
Hence we have
\[
\min_{y \in X} \sup_{x \in X} f(x,y) \leq \sup_{x \in X} f(x,x),
\]
which completes the proof.
\end{proof}

Finally we have the Nash's existence theorem for non-compact strategy sets.
\begin{theorem}\label{Nash}
	Assume hypothesis (H) and the following ones:
	\begin{enumerate}
		\item[(I)] Let $ p = (p_1,\cdots,p_n) $. For each $ i = 1,\cdots,n $, fix all the components $ p_j $ when $ j \neq i $ and the functional
		\[
		p_i \mapsto f_i(p)
		\]
		is convex on $ K_i $.
		\item[(II)] Each $ f_i $ is weakly continuous on $ X $.
		\item[(III)] There exists a $ p_0 = (p_{10},\cdots,p_{n0}) \in X $ such that $ \forall i \in \{1,\cdots,n\} $, the set
		\[
		T(p_0) := \bigl\{q \in X \vert \sum_{i=1}^n \bigl[f_i(q) - f_i(q_1,\cdots,q_{i-1},p_{i0},q_{i+1},\cdots,q_n)\bigr] \leq 0\bigr\}
		\]
		is bounded in $ E $.
	\end{enumerate}
	Then there is at least one Nash equilibrium in $ X $.
\end{theorem}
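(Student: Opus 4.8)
The plan is to reduce the existence of a Nash equilibrium to a single application of the generalized Ky Fan minimax inequality (Theorem \ref{newFanKy}). To this end I would introduce the auxiliary functional $\Phi \colon X \times X \to \mathbb{R}$ defined by
\[
\Phi(p,q) := \sum_{i=1}^n \bigl[f_i(q) - f_i(q_1,\cdots,q_{i-1},p_i,q_{i+1},\cdots,q_n)\bigr],
\]
so that the first variable $p$ plays the role of the quasi-concave variable $x$ and the second variable $q$ the role of the weakly lower semicontinuous variable $y$ in Theorem \ref{newFanKy}. The whole argument then amounts to checking that $\Phi$ verifies hypotheses (i)--(iv) of that theorem and afterwards translating its conclusion back into the defining inequalities of a Nash equilibrium.

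For the verification I would proceed hypothesis by hypothesis. Fixing $q$, the term $\sum_i f_i(q)$ is constant in $p$, while each summand $p \mapsto f_i(q_1,\cdots,p_i,\cdots,q_n)$ depends only on the single block $p_i$ and is convex there by hypothesis (I); hence $p \mapsto \Phi(p,q)$ is concave, in particular quasi-concave, which gives (i). Fixing $p$, each map $q \mapsto f_i(q)$ is weakly continuous by (II), and $q \mapsto f_i(q_1,\cdots,p_i,\cdots,q_n)$ is the composition of the coordinate-replacement map with $f_i$ and is therefore weakly continuous as well, so $q \mapsto \Phi(p,q)$ is weakly continuous and a fortiori weakly lower semicontinuous, giving (ii). The diagonal evaluation $\Phi(p,p) = 0$ for every $p$ is immediate, since replacing $p_i$ in the $i$-th slot of $p$ returns $p$ itself; thus $m := \sup_{p} \Phi(p,p) = 0 < +\infty$, which is (iii). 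Finally, with this value $m = 0$ the set $\{q \in X : \Phi(p_0,q) \le m\}$ is literally the set $T(p_0)$ of hypothesis (III), whose boundedness is assumed, so (iv) holds.

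Theorem \ref{newFanKy} then yields a point $\tilde{q} \in X$ with $\sup_{p \in X} \Phi(p,\tilde{q}) \le 0$. To finish, I would test this inequality one coordinate at a time: for a fixed index $k$ and an arbitrary $\bar{p}_k \in K_k$, choose $p$ equal to $\tilde{q}$ in every block except the $k$-th, where it equals $\bar{p}_k$. Every summand with $i \neq k$ vanishes because there $p_i = \tilde{q}_i$, leaving
\[
f_k(\tilde{q}) - f_k(\tilde{q}_1,\cdots,\tilde{q}_{k-1},\bar{p}_k,\tilde{q}_{k+1},\cdots,\tilde{q}_n) \le 0.
\]
As $k$ and $\bar{p}_k$ are arbitrary, and the value $f_k(\tilde{q})$ is itself attained at $p_k = \tilde{q}_k$, this is exactly the Nash condition $f_k(\tilde{q}) = \min_{p_k \in K_k} f_k(\tilde{q}_1,\cdots,\tilde{q}_{k-1},p_k,\tilde{q}_{k+1},\cdots,\tilde{q}_n)$, so $\tilde{q}$ is a Nash equilibrium.

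The step I expect to demand the most care is hypothesis (ii), specifically the weak continuity of the partial map $q \mapsto f_i(q_1,\cdots,p_i,\cdots,q_n)$. This rests on the coordinate-replacement map $q \mapsto (q_1,\cdots,p_i,\cdots,q_n)$ being weakly continuous on $X$, which in turn uses the fact that the weak topology of the product $E = E_1 \times \cdots \times E_n$ coincides with the product of the weak topologies of the factors---a point worth spelling out, since $f_i$ is only assumed weakly continuous as a function on the full product. Everything else is either a direct consequence of the convexity and continuity hypotheses or a bookkeeping identity.
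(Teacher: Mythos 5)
Your proposal is correct and follows essentially the same route as the paper's own proof: you define the same aggregate functional (the paper calls it $f$, you call it $\Phi$), verify hypotheses (i)--(iv) of Theorem \ref{newFanKy} in the same way, and extract the Nash condition by the same coordinate-wise substitution $p = (\tilde{q}_1,\cdots,\bar{p}_k,\cdots,\tilde{q}_n)$. Your added remark that the weak topology on the finite product $E = E_1 \times \cdots \times E_n$ agrees with the product of the weak topologies is a worthwhile detail that the paper leaves implicit.
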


\begin{proof}
	Set
	\begin{equation*}
		\begin{split}
			f \colon X \times X &\to \mathbb{R}
			\\(p,q) &\mapsto \sum_{i = 1}^{n} \bigl[f_i(q) - f_i(q_1,\cdots,q_{i-1},p_i,q_{i+1},\cdots,q_n)\bigr].
		\end{split}
	\end{equation*}
	and
	\begin{equation*}
		\begin{split}
			T \colon X &\to 2^X
			\\p &\mapsto \{q \in X \vert f(p,q) \leq 0\}.
		\end{split}
	\end{equation*}
	
	(i) From hypothesis (I) we obtain that for any fixed $ q \in X $, the functional
	\[
	p \mapsto f(p,q)
	\]
	is concave on $ X $.
	
	(ii) By hypothesis (II) we get the weak continuity of $ f $.
	
	(iii) $ f(p,p) = 0 $ for all $ p \in X $.

	(iv) Observing hypothesis (III) we have that $ T(p_0) $ is bounded in $ E $ since for each $ q \in T(p_0) $, we have that
	\[
	\lVert q \rVert_E = \sqrt{\sum_{i=1}^n \lVert q_i \rVert_{E_i}^2} \leq \sum_{i=1}^n \lVert q_i \rVert_{E_i} < +\infty.
	\]
	
	Using Theorem \ref{newFanKy}, there exists a $ \tilde{q} \in X $ such that
	\[
	f(p,\tilde{q}) \leq 0, \: \forall p \in X.
	\]
	In particular, for any $ i = 1,2,\cdots,n $, we choose
	\[
	p^i = (\tilde{q}_1,\cdots,\tilde{q}_{i-1},p^i_i,\tilde{q}_{i+1}\cdots,\tilde{q}_n) \in X.
	\]
	Then
	\[
	f_i(\tilde{q}) - f_i(p^i) \leq 0, \: \forall i \in \{1,\cdots,n\},
	\]
	that is,
	\[
	f_i(\tilde{q}) \leq f_i(p^i) = f_i(\tilde{q}_1,\cdots,\tilde{q}_{i-1},p^i_i,\tilde{q}_{i+1}\cdots,\tilde{q}_n), \: \forall p^i_i \in K_i \text{ and } \forall i \in \{1,\cdots,n\}.
	\]
	It means that $ \tilde{q} $ is exactly the Nash equilibrium and the proof is complete.
\end{proof}

In Theorem \ref{Nash}, the condition (III) is usually difficult to satisfy for $ n \geq 3 $, however it holds for the following practical and concise situation for the John von Neumann's two-person zero-sum game in the unbounded strategy sets. Comparing with the result of Zeidler (see Theorem 2.G. in Page 76 and Proposition 1 in Page 80 of \cite{Zeidler}), our assumptions are weaker, especially here we did not need the strict convexity of the space $ X $.
\begin{theorem}
	In a two-player zero-sum game, denote $ P_1 $'s loss functional as
	\[
	f_1 \colon K_1 \times K_2 =: X \to \mathbb{R},
	\]
	where $ K_i $ (i.e., the strategies of $ P_i $) is a nonempty convex set in a suitable reflexive Banach space $ E_i $. Then clearly that $ f_2 = - f_1 $. Assume that
	\begin{enumerate}
		\item[(S1)] For any fixed $ p_2 \in K_2 $, the functional $ p_1 \mapsto f_1(p_1,p_2) $ is convex and lower semicontinuous on $ K_1 $.
		\item[(S2)] For any fixed $ p_1 \in K_1 $, the functional $ p_2 \mapsto f_1(p_1,p_2) $ is concave and upper semicontinuous on $ K_2 $.
		\item[(S3)] There exists a $ p_0 := (p_{10},p_{20}) \in X $ such that the functional $ \cdot \mapsto f_1(\cdot,p_{20}) $ is coercive on $ K_1 $ and $ \cdot \mapsto - f_1(p_{10},\cdot) $ is coercive on $ K_2 $.
	\end{enumerate}
	Then there is at least one Nash equilibrium in $ X $ and
	\[
	\min_{p_1 \in K_1} \max_{p_2 \in K_2} f_1(p_1,p_2) = \max_{p_2 \in K_2} \min_{p_1 \in K_1} f_1(p_1,p_2).
	\]
\end{theorem}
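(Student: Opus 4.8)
The plan is to read this two-player zero-sum statement as a direct application of the generalized Ky Fan minimax inequality (Theorem \ref{newFanKy}) rather than of Theorem \ref{Nash}. The reason is that hypotheses (S1)--(S2) only supply \emph{separate} semicontinuity in each variable (lower in $p_1$, upper in $p_2$), whereas condition (II) of Theorem \ref{Nash} demands joint weak continuity of $f_1$, which need not hold here. Accordingly I would introduce the functional
\[
f \colon X \times X \to \mathbb{R}, \qquad f\bigl((p_1,p_2),(q_1,q_2)\bigr) := f_1(q_1,p_2) - f_1(p_1,q_2),
\]
which is exactly the $n=2$ specialization of the auxiliary functional in the proof of Theorem \ref{Nash} after substituting $f_2 = -f_1$ and cancelling the two copies of $f_1(q_1,q_2)$.

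Next I would verify the four hypotheses of Theorem \ref{newFanKy} for this $f$. Condition (iii) is immediate, since $f(p,p)=0$, so $m=0$. For (i), fix $q$: the map $p_2 \mapsto f_1(q_1,p_2)$ is concave by (S2) and $p_1 \mapsto -f_1(p_1,q_2)$ is concave by (S1), so $p \mapsto f(p,q)$ is concave, hence quasi-concave. For (ii), fix $p$: the map $q_1 \mapsto f_1(q_1,p_2)$ is convex and lower semicontinuous by (S1), while $q_2 \mapsto -f_1(p_1,q_2)$ is convex and lower semicontinuous by (S2); here I would invoke the standard fact that a convex lower semicontinuous functional on a convex set is \emph{weakly} lower semicontinuous (its sublevel sets are closed and convex, hence weakly closed by Mazur's theorem), so that $q \mapsto f(p,q)$ is weakly lower semicontinuous.

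The crux is condition (iv), the boundedness of $T(p_0) = \{(q_1,q_2) : f_1(q_1,p_{20}) \le f_1(p_{10},q_2)\}$, and this is the step I expect to be the main obstacle, since it is where the two coercivity assumptions in (S3) must be coupled and where lower semicontinuity (not just convexity) is genuinely used. Writing $g(q_1) := f_1(q_1,p_{20})$ and $\psi(q_2) := -f_1(p_{10},q_2)$, both are convex, lower semicontinuous, and coercive by (S3), and $T(p_0) = \{g(q_1)+\psi(q_2) \le 0\}$. Each of $g,\psi$ admits a continuous affine minorant (being proper convex and lower semicontinuous), hence is bounded below on bounded sets; combined with coercivity this forces $g$ and $\psi$ to be bounded below \emph{globally}, say $g \ge -c_1$ and $\psi \ge -c_2$. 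On $T(p_0)$ one then gets $g(q_1) \le -\psi(q_2) \le c_2$ and $\psi(q_2) \le -g(q_1) \le c_1$, and coercivity converts these sublevel bounds into separate norm bounds on $q_1$ and $q_2$, so $T(p_0)$ is bounded.

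Finally, Theorem \ref{newFanKy} yields $\tilde q = (\tilde q_1,\tilde q_2) \in X$ with $f(p,\tilde q) \le 0$ for all $p$, i.e. $f_1(\tilde q_1,p_2) \le f_1(p_1,\tilde q_2)$. Taking $p_1=\tilde q_1$ and then $p_2=\tilde q_2$ produces the saddle-point chain
\[
f_1(\tilde q_1,p_2) \le f_1(\tilde q_1,\tilde q_2) \le f_1(p_1,\tilde q_2) \qquad \forall\, p_1 \in K_1,\ p_2 \in K_2,
\]
which, since $P_1$ minimizes $f_1$ and $P_2$ minimizes $f_2=-f_1$, is precisely a Nash equilibrium. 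Setting $v := f_1(\tilde q_1,\tilde q_2)$, the right inequality gives $\max_{p_2}\min_{p_1} f_1 \ge v$ and the left gives $\min_{p_1}\max_{p_2} f_1 \le v$; together with the elementary weak-duality inequality $\max_{p_2}\min_{p_1} f_1 \le \min_{p_1}\max_{p_2} f_1$, all three quantities collapse to $v$, with the inner and outer extrema attained at $\tilde q_1,\tilde q_2$, which delivers the stated minimax equality.
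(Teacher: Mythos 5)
Your proposal is correct and follows essentially the same route as the paper: the same auxiliary functional $ f(p,q) = f_1(q_1,p_2) - f_1(p_1,q_2) $, the same verification of hypotheses (i)--(iii) of Theorem \ref{newFanKy} (concavity in $p$, weak lower semicontinuity in $q$ via convexity plus lower semicontinuity and Mazur's lemma, $ f(p,p)=0 $), and the same saddle-point extraction of the equilibrium and the minimax equality. If anything, your treatment of hypothesis (iv) is more careful than the paper's, which merely asserts that $ q \mapsto f(p_0,q) $ is coercive on $ X $ and invokes Remark \ref{coercive}; your affine-minorant argument supplies the global lower bounds on $ g(q_1) = f_1(q_1,p_{20}) $ and $ \psi(q_2) = -f_1(p_{10},q_2) $ that are tacitly needed for the sum of two separately coercive functions to be coercive (or for the sublevel set to be bounded), so it fills in a detail the paper glosses over.
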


\begin{proof}
	Set
	\begin{equation*}
		\begin{split}
			f \colon X \times X &\to \mathbb{R}
			\\(p,q) &\mapsto - f_1(p_1,q_2) + f_1(q_1,p_2).
		\end{split}
	\end{equation*}
	
	(i) For any fixed $ q \in X $, $ p \mapsto f(p,q) $ is concave on $ X $ since $ \forall x, y \in X $ and $ \forall \lambda \in [0,1] $, by applying conditions (S1) and (S2), there holds
	\begin{equation*}
		\begin{split}
			f\bigl(\lambda x + (1 - \lambda)y, q\bigr) &= - f_1\bigl(\lambda x_1 + (1 - \lambda)y_1,q_2\bigr) + f_1\bigl(q_1,\lambda x_2 + (1 - \lambda)y_2\bigr)
			\\&\geq \lambda \bigl(- f_1(x_1, q_2) + f_1(q_1, x_2)\bigr) + (1 - \lambda) \bigl(- f_1(y_1, q_2) + f_1(q_1, y_2)\bigr)
			\\&= \lambda f(x,q) + (1-\lambda)f(y,q).
		\end{split}
	\end{equation*}
	
	(ii) Similar to the conclusion (i) above, the function $ q \mapsto f(p,q) $ is convex on $ X $ (and of course quasi-convex), then we have the set $ T(p) := \bigl\{q \in X \vert f(p,q) \leq m\bigr\} $ is convex for every $ m \in \mathbb{R} $ and for any $ p \in X $. And clearly that both $ q \mapsto f_1(q_1,p_2) $ and $ q \mapsto -f_1(p_1,q_2) $ are lower semicontinuous on $ X $, we have that $ T(p) $ is closed in $ X $. Since $ T(p) $ is both closed and convex, using Mazur's lemma (see Page 6 in \cite{Ekeland}), we obtain that $ T(p) $ is weakly closed. Hence the functional $ q \mapsto f(p,q) $ is weakly lower semicontinuous on $ X $ for any fixed $ p $ by definition.
	
	(iii) $ f(p,p) = 0 $ for all $ p \in X $.
	
	(iv) By applying condition (S3) we have that the functional
	\[
	q \mapsto f(p_0,q) = -f_1(p_{10},q_2) + f_1(q_1,p_{20})
	\]
	is coercive on $ X $ and the set $ \bigl\{q \in X \vert f(p_0, q) \leq 0\bigr\} $ is bounded in $ X $ straightly by Remark \ref{coercive}.
	
	Hence, using Theorem \ref{newFanKy} we have that there exists a $ \tilde{q} \in X $ such that
	\[
	f(p,\tilde{q}) \leq 0, \: \forall p \in X.
	\]
	Similar to Theorem \ref{Nash}, this $ \tilde{q} $ is exactly the Nash equilibrium and
	\[
	\min_{p_1 \in K_1} f_1(p_1, \tilde{q}_2) = f_1(\tilde{q}_1,\tilde{q}_2) = \max_{p_2 \in K_2} f_1(\tilde{q}_1,p_2),
	\]
	that is,
	\[
	\max_{p_2 \in K_2} \min_{p_1 \in K_1} f_1(p_1, p_2) = f_1(\tilde{q}_1,\tilde{q}_2) = \min_{p_1 \in K_1} \max_{p_2 \in K_2} f_1(p_1,p_2).
	\]
\end{proof}

\bibliographystyle{amsplain}
\bibliography{ams}

\end{document}